\documentclass[12pt]{article}
\usepackage{amsmath,amssymb,amsthm,fullpage}
\usepackage[numbers]{natbib}

\newcommand{\Z}{{\mathbb Z}}

\newcommand{\R}{{\mathbb R}}

\usepackage{braket}

\newcommand{\parens}[1]{\left( #1 \right)}

\theoremstyle{plain}
\newtheorem{theorem}{Theorem}[section]
\newtheorem{corollary}[theorem]{Corollary}

\theoremstyle{remark}
\newtheorem*{remark}{Remark}

\theoremstyle{definition}
\newtheorem{definition}{Definition}[section]

\usepackage{color}
\definecolor{red}{rgb}{.8,0,0}
\definecolor{green}{rgb}{0,.7,0}
\definecolor{blue}{rgb}{0,0,.8}

\title{Bounds on curvature in regular graphs}
\author{Peter Ralli\thanks{School of Mathematics, Georgia Institute of Technology.  Supported in part by NSF grant DMS 1407657.}}
\date{}
\begin{document}
\maketitle

\begin{abstract}
We study the curvature-dimension inequality in regular graphs.  We develop techniques for calculating the curvature of such graphs, and we give characterizations of classes of graphs with positive, zero, and negative curvature.  Our main result is to compare the curvature-dimension inequality in these classes to the so-called Ollivier curvature.  A consequence of our results is that in the case that the graph contains no subgraph isomorphic to either $K_3$ or $K_{2,3}$ these curvatures usually have the same sign, and we characterize the exceptions.  
\end{abstract}

\section{Introduction}
Recently there have been several attempts to translate the well-understood concept of curvature from Riemannian geometry to discrete spaces.  In the continuous setting, the Bochner formula characterizes harmonic functions in terms of the curvature.  Based on this, Bakry and \'{E}mery \cite{BE} developed the Curvature-Dimension inequality, which has since been adapted to define curvature in a discrete setting \cite{KKRT15}.  There have recently been many results on the spectral and isoperimetric properties of graphs under a bound on the CD-curvature \cite{CLY14}\cite{KKRT15} with further references therein.

An alternate notion of discrete curvature is the Ollivier curvature, proposed by Y. Ollivier\cite{Oll07} and independently by Sammer \cite{Sam05} which has been further investigated at length, e.g. \cite{Oll09}.  This curvature compares the minimum-transport distance of balls on a curved space to that of balls in Euclidean space.

In this work we investigate the CD-curvature for regular graphs, and compare to the Ollivier curvature.  It is known that these notions are not in general equivalent, indeed, in this work we give examples of graphs for which the curvature is positive in one notion but not the other.  For graphs that are triangle-free and have no subgraph isomorphic to $K_{2,3}$, we develop rules for calculating both types of curvature, and observe that the signs of the curvatures are usually equivalent.  We also characterize the classes of graphs for which the signs are not equivalent.  We calculate the curvature for several examples of interest, including the graph of the state space of the random interchange process.

In Section 2, we provide the definitions of both the curvature-dimension inequality and of Ollivier's curvature.  In Sections 3 and 4, we investigate the curvature of regular graphs that have no subgraph isomorphic to either $K_3$.  In addition we prove a number of short results related to these notions of curvature.

\section{Preliminaries}  

\subsection{Curvature-Dimension inequality} The $CD$-inequality was introduced in \cite{BE85} and several variations have been studied.  Our definitions follow from \cite{KKRT15} and we use the non-normalized Laplacian matrix.  For a $d$-regular graph, the curvature differs from that explored by Chung, Lin, Yau \cite{CLY14} by a multiplicative factor of $d$; those definitions result from choosing a normalized Laplacian.

If $G = (V,E)$ is a locally finite graph and $f,g:V\to\R$, then

\[\Gamma (f,g)(x) = \frac{1}{2}\sum_{y\sim x}\parens{f(y)-f(x)}\parens{g(y)-g(x)},\]
\[\Gamma f(x) :=\Gamma(f,f)(x),\]
\[\Delta f(x) = \sum_{y\sim x} f(y)-f(x),\]
\[\text{and }\Gamma_2 f = \frac{1}{2} \Delta\Gamma f - \Gamma(f,\Delta f).\]

Let $f:V\to\R$ and $x\in V$. Because $\Delta f = \Delta (f+c)$ and $\Gamma f = \Gamma(f+c)$, we may (and will) assume that $f(x) = 0$.  In that case straightforward manipulation (as in \cite{KKRT15}) reveals a form for $\Gamma_2f(x)$ that often simplifies computation: 

\begin{align*}\label{gamma}
2\Gamma_2f(x) = &\frac{1}{2}\sum_{\substack{u\sim v\sim x\\ d(x,u) = 2}}\parens{f(u)-2f(v)}^2 + \parens{\sum_{v\sim x}f(v)}^2 + \sum_{v\sim x} \frac{4-d(x)-d(v)}{2}f^2(v)
\nonumber\\
 +&\sum_{\Delta(x,v,u)}\biggl[2\parens{f(v)-f(u)}^2 + \frac{1}{2}\parens{f^2(v) + f^2 (u)}\biggr]\,,
\end{align*}

where $\Delta(x,v,u)$ means that $x\sim v\sim u\sim v$.

\begin{definition} We say that $G$ satisfies the $CD(\rho,\infty)$ condition at $x$ iff for all $f:V\to\R$, $\Gamma_2f(x)\geq\rho \Gamma f(x)$.\end{definition}

\begin{remark} Whether $CD(\rho,\infty)$ is satisfied for $G$ at $x$ is a local property; in particular, it depends only on the structure of edges that are incident to at least one neighbor of $x$.  It is possible to remove all other edges without affecting the curvature at $x$.
\end{remark}

An obvious concern is to characterize the non-constant function $f$ that minimizes $\Gamma_2f(x)/\Gamma f(x)$.  It is understood how to calculate the $f(u)$ if $d(x,u) = 2$.  A simple optimization reveals that, holding $f(v)$ constant for all $v\sim x$, the value of $f(u)$ minimizing $\Gamma_2f(x)$ is 
\[f(u) = \frac{2\sum_{v: x\sim v\sim u} f(v)}{\#\{v:x\sim v\sim u\}}.\]

\subsection{Ollivier curvature}

The second form of discrete curvature that we consider is the so-called Ollivier curvature.

For probability measures $\mu,\nu$ on $V$, the $L_1$ Wasserstein (i.e., minimum-transport) distance is \begin{align*}W_1(\mu,\nu) = \min_m \int_{V\times V} d(x,y) dm(x,y),\end{align*} where the minimum is taken over all probability measures $m$ on $V\times V$ so that \begin{align*}\int_V m(x,y) d\nu(y) = \mu(x)\text{ and } \int_V m(x,y) d\mu(x) = \nu(y).\end{align*}  In plain words, we transport $\mu$ to $\nu$ by shifting a load of size $m(x,y)$ along an $(x,y)$-geodesic, and $W_1$ minimizes the average distance transported over all choices of transport function $m$. 

Let $G$ be a $d$-regular graph.  For $x\in V$, define a probability measure $\mu_x$ so that \begin{align*}\mu_x(v) = \begin{cases} 
      \tfrac{1}{2} & \text{if }v = x \\
      \frac{1}{2d} & \text{if }v \sim x \\
      0 & \text{otherwise.}
   \end{cases}\end{align*}

\begin{definition} If $x,y\in V$ and $x\sim y$, the curvature is $\kappa(x,y) = 1-W_1(\mu_x,\mu_y)$.
\end{definition}

\section{$K_{2,3}$ and triangle-free graphs}

In this section we compare the $CD$ curvature to Ollivier's curvature for regular graphs that contain no subgraphs isomorphic to either $K_3$ or $K_{2,3}$.

\begin{definition}If $x\in V$ and $y,z\sim x$, we say that $y$ and $z$ are \textit{linked} if there is a vertex $w\neq x$ so that $y\sim w\sim z$.  We write $
y\approx z$ if $y$ and $z$ are linked and $y\not\approx z$ if not.\end{definition}

\begin{definition} If $x,y\in V$ and $x\sim y$, the non-linking number $N_x(y)$ is the number of other neighbors of $x$ that are not linked to $y$: $|\{w\sim x:w\neq y, w\not\approx y\}|$.\end{definition}

\begin{theorem} Let $G$ be $d$-regular and have no subgraph isomorphic to either $K_3$ or $K_{2,3}$.  Let $N = \max_{y\sim x} N_x(y)$.

(i) If $N = 0$, then $G$ satisfies $CD(\rho,\infty)$ at $x$ iff $\rho \leq 2$ ($G$ is positively curved at $x$.)

(ii) If $N = 1$, then $G$ satisfies $CD(\rho,\infty)$ at $x$ iff $\rho \leq 0$. ($G$ is flat at $x$.)

(iii) If $N \geq 2$, then $G$ does not satisfy $CD(0,\infty)$ at $x$. ($G$ is negatively curved at $x$.)
\end{theorem}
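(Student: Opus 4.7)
The plan is to reduce the theorem to an eigenvalue problem on $\R^{N(x)}$, where $N(x) := \{v : v \sim x\}$. Using triangle-freeness (the $\Delta(x,v,u)$ term vanishes) and $d$-regularity (so $\tfrac{4-d(x)-d(v)}{2} = 2-d$), the stated formula for $2\Gamma_2 f(x)$ simplifies; the values $f(u)$ at distance-$2$ vertices appear only in the first sum, and plugging in the stated minimizer $f(u) = \tfrac{2 \sum_{v : x \sim v \sim u} f(v)}{c(u)}$, with $c(u) := |\{v : x \sim v \sim u\}|$, converts it to a quadratic form in $\vec{f} := (f(v))_{v \sim x}$. The $K_{2,3}$-free hypothesis forces $c(u) \leq 2$, since any two vertices of $N(x)$ share at most one common neighbor other than $x$. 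A short computation shows the optimized contribution from $u$ vanishes when $c(u) = 1$ and equals $(f(v_1) - f(v_2))^2$ when $c(u) = 2$ (with $v_1, v_2$ the two common neighbors); summing over $u$ identifies this term with $\vec{f}^T L_H \vec{f}$, where $L_H$ is the graph Laplacian of the \emph{linking graph} $H$ on $N(x)$ whose edges are the linked pairs. Since $2\Gamma f(x) = \norm{\vec{f}}^2$, the largest admissible $\rho$ equals the least eigenvalue of $M := L_H + J + (2-d)I$, where $J$ is the all-ones matrix.

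The three cases then become spectral statements about $M$. \emph{Case (i):} if $N = 0$, then $H = K_d$, so $L_H = dI - J$ and $M = 2I$, giving $\rho = 2$. \emph{Case (ii):} non-linking is symmetric and has maximum degree $\leq 1$, so the complement of $H$ in $K_d$ is a nonempty matching $M'$; then $M = 2I - L_{M'}$, and since the Laplacian of a nonempty matching has spectrum $\{0, 2\}$ with $2$ attained, the least eigenvalue of $M$ is $0$. \emph{Case (iii):} pick $y$ with two unlinked neighbors $z_1, z_2$, and test $\vec{f}$ defined by $f(y) = -2$, $f(z_1) = f(z_2) = 1$, and $f \equiv 0$ on the remaining $d - 3$ neighbors, so $\sum f = 0$ and $\norm{\vec{f}}^2 = 6$. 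The pairs $(y, z_i)$ are unlinked and $(z_1, z_2)$ contributes $0$ either way, so $\vec{f}^T L_H \vec{f}$ only counts links to the zero set, with each link of $y$ contributing $4$ and each link of $z_i$ contributing $1$. Since $y$ has at most $d-3$ further links (the non-links $z_1, z_2$ are excluded) and each $z_i$ has at most $d-3$ links into the zero set, one obtains $\vec{f}^T L_H \vec{f} \leq 4(d-3) + 2(d-3) = 6(d-3)$, and hence $\vec{f}^T M \vec{f} \leq 6(d-3) + 6(2-d) = -6 < 0$.

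The principal obstacle is the bookkeeping of the reduction step, namely correctly identifying the optimized distance-$2$ contribution with the Laplacian of the linking graph; once that reduction is in place, case (i) is immediate, case (ii) follows from the spectrum of a matching Laplacian, and case (iii) is handled by the explicit test vector above.
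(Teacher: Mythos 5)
Your proposal is correct, and the reduction is sound: triangle-freeness kills the $\Delta(x,v,u)$ terms, $K_{2,3}$-freeness gives $c(u)\le 2$ and a bijection between linked pairs and linking vertices, the optimized distance-$2$ contribution is exactly $\vec f^{\,T}L_H\vec f$, and all three cases then read off correctly from $M=L_H+J+(2-d)I$ (I checked the arithmetic in (iii): $6(d-3)+6(2-d)=-6$). The paper reaches the same conclusions by a more ad hoc, case-by-case route: for (i) it invokes a local isomorphism with the hypercube $\Omega_d$ and cites the known positive curvature of $\Omega_d$; for (ii) it exhibits the explicit null function ($f=\pm1$ on an unlinked pair, $0$ on the other neighbors) and then proves nonnegativity of $\Gamma_2$ by a chain of inequalities that bounds, rather than exactly computes, the distance-$2$ sum; for (iii) it uses the test vector $f(y)=d-1$, $f(z)=-1$ on all other neighbors, where yours uses $(-2,1,1,0,\dots,0)$ --- both work. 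What your organization buys is uniformity and sharpness: the optimal $\rho$ is identified in all cases as $\lambda_{\min}(M)$, case (i) needs no appeal to an external hypercube computation, and case (ii) becomes a one-line spectral fact about matchings (your zero eigenvector of $2I-L_{M'}$ is precisely the paper's test function). It is worth noting that your quadratic form is essentially the specialization of the paper's later identity for $2\Gamma_2 f(x)$ in terms of the linkage $l(w,y)$ (Section 4, the equation preceding Theorem 4.1) to the $K_{2,3}$-free case where $l\in\{0,\tfrac12\}$; the paper simply does not exploit that identity in the proof of Theorem 3.1. The only point to make explicit when writing this up is that $\Gamma f(x)=\tfrac12\norm{\vec f}^2$ depends only on the values at neighbors, so minimizing $2\Gamma_2 f(x)$ over the distance-$2$ values before comparing with $\rho\norm{\vec f}^2$ is legitimate, and that a null (or negative) eigenvector of $M$ extends to an actual function on $V$ witnessing failure of $CD(\rho,\infty)$ for $\rho>\lambda_{\min}(M)$; you assert this implicitly and it is true.
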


\begin{proof}
Because $G$ has no subgraph isomorphic to $K_{2,3}$, if neighbors $y,z$ of $x$ are linked, then they are linked by a unique vertex $w$, and the link $w$ cannot be adjacent to any neighbor of $x$ other than $y$ and $z$.\\

(i) If $N=0$, then $G$ is locally isomorphic to the hypercube $\Omega_d$. That is, there is a local isomorphism $\phi:V(G)\to V(\Omega_d)$ with the property that $\Gamma_2 f(x) = \Gamma_2 f\circ\phi(x)$ and $\Gamma f(x) = \Gamma f\circ\phi(x)$ for all $f:V\to\R$.  It is well-known \cite{KKRT15} that $\Omega_d$ has positive curvature, likewise $G$ has positive curvature.\\

(ii) To show that $CD(\rho,\infty)$ fails at $x$ if $\rho > 0$, we describe a test-function $f$ for which $\Gamma_2 f (x) = 0$:

  Let $y,z$ be a pair of unlinked neighbors of $x$, set $f(x) = 0$, $f(y) = 1$, $f(z) = -1$.  For any other neighbor of $x$, set $f = 0$.  On the second-neighbors of $x$, set $f$ to be twice the average of $f$ evaluated on all neighbors of $x$ adjacent to that vertex, so as to minimize $\Gamma_2$.  Notice that $y$ will be linked to $d-2$ neighbors of $x$(all besides $z$ and itself) by $d-2$ distinct vertices, each with $f = 1$, and will have one neighbor that is not adjacent to any other neighbor of $x$, at which $f = 2$.  Likewise, $z$ will have $d-2$ neighbors with $f = -1$ and one neighbor with $f = -2$.  And, each other neighbor of $x$ will be adjacent to one vertex with $f = 1$ and one with $f = -1$: the vertices linking them to $y$ and $z$ respectively.

With this function it is straightforward to see that $2\Gamma_2 f(x) = 0$.\\

We now show that for any function $f$, $2\Gamma_2f(x) \geq 0$, and therefore $CD(\rho,\infty)$ holds for any $\rho \leq 0$:

Because there is a bijection between pairs of linked neighbors of $x$ and the linking vertices,

\begin{align*}
\frac{1}{2}\sum_{\substack{u\sim v\sim x\\ d(x,u) = 2}}\parens{f(u)-2f(v)}^2 \geq \frac{1}{2}\sum_{\substack{y,z\sim x\\ y\approx z}} \parens{f(y)-2f(w)}^2 + \parens{f(z)-2f(w)}^2 \geq \sum_{\substack{y,z\sim x\\ y\approx z}} \parens{f(y)-f(z)}^2,
\end{align*} where $w$ is the vertex linking $y$ and $z$, and the sum is taken over all unordered pairs $y,z$ of linked neighbors of $x$.

Using this in the expression for $\Gamma_2$, we find

\begin{align*} 2\Gamma_2f(x) &\geq \sum_{\substack{y,z\sim x\\ y\approx z}} \parens{f(y)-f(z)}^2+\parens{\sum_{y\sim x}f(v)}^2 + (2-d)\sum_{y\sim x} f^2(v) \\&= \sum_{y\sim x} (3-d + \#\{z\sim x:y\approx z\})f^2(y) + \sum_{\substack{y,z\sim x\\ y\approx z}}2f(y)f(z) \\ &\geq \sum_{y\sim x} f^2(y) + \sum_{\substack{y,z\sim x\\ y\approx z}}2f(y)f(z) \geq \sum_{\substack{y,z\sim x\\ y\approx z}}\parens{f(y)-f(z)}^2 \geq 0,
\end{align*} because each neighbor $y$ of $x$ is linked to at least $d-2$ of the $d-1$ other neighbors of $x$, and not linked to at most $1$ other neighbor of $x$.\\

(iii).  Let $y$ be a neighbor of $x$ with $N_x(y)>1$.  Define a test-function $f$: $f(x) = 0$, $f(y) = d-1$, otherwise if $z\sim x$, $f(z) = -1$, and choose the values of $f$ on the second-neighbors of $x$ to minimize $\Gamma_2$.

It is straightforward to calculate that \begin{align*}
2\Gamma_2f(x) &= \biggl(\sum_{\substack{z\sim x\\ y\approx z}} \parens{f(y)-f(z)}^2\biggr) + (2-d)[(d-1)^2 + (d-1)]\\ &\leq (d-3)d^2 + (2-d)(d^2 - d) = -2d < 0,
\end{align*} and therefore $CD(0,\infty)$ fails at $x$.  The bound on the first term is due to the fact that $y$ is unlinked to at least $2$ of the $d-1$ other neighbors of $x$.

\end{proof}

We give a similar characterization of the Ollivier curvature:

\begin{theorem} Let $G$ be $d$-regular and have no subgraph isomorphic to either $K_3$ or $K_{2,3}$.  Let $N = \max_{y\sim x} N_x(y)$.

(i) If $N = 0$, then $\kappa(x,y) > 0$.

(ii) If $N = 1$, then $\kappa(x,y) \geq 0$.

(iii) If $N \geq 2$, then $\kappa(x,y) \leq 0$.
\end{theorem}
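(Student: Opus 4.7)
The plan is to use the two standard dual characterizations of $W_1$: for parts (i) and (ii) I will exhibit explicit transport plans (upper bounds on $W_1$, hence lower bounds on $\kappa$), while for part (iii) I will use the Kantorovich--Rubinstein duality
\[W_1(\mu_x,\mu_y) = \sup_{f\ \text{1-Lipschitz}} \int f\,d\mu_x - \int f\,d\mu_y\]
to construct a 1-Lipschitz witness forcing $W_1\geq 1$.  The structural backbone in all three cases is a partial matching on the bipartite graph $H$ with parts $N(x)\setminus\{y\}$ and $N(y)\setminus\{x\}$, where $z$ and $w$ are joined in $H$ iff $zw\in E(G)$.  The first step is to verify that $H$ is indeed a matching: triangle-freeness keeps the two sides disjoint, and, as already noted in the preceding theorem's proof, the $K_{2,3}$-free hypothesis forces each vertex on either side to have at most one neighbor on the other.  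It follows that $N_x(y) = N_y(x)$; call this common value $k$.

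For part (i), $k=0$ means $H$ is a perfect matching.  The transport plan keeps $1/(2d)$ of the mass at $x$ (matching $\mu_y(x)$), moves $(d-1)/(2d)$ from $x$ directly to $y$, keeps the pre-existing $1/(2d)$ of $\mu_x$ at $y$, and for each matched pair $(z,w)$ shifts $1/(2d)$ along the edge $zw$.  Every move has length $1$, so the total cost is $(d-1)/d < 1$, giving $\kappa(x,y)>0$.  Part (ii) uses the same plan when $k=0$; when $k=1$ it adds a single move of $1/(2d)$ units from the unmatched $z_0$ to the unmatched $w_0$ at distance at most $3$ (the detour $z_0 \to x \to y \to w_0$ always works), bringing the total cost to at most $(d-1+d-2+3)/(2d)=1$.

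For part (iii), I would take the test function $f(x)=1$, $f(y)=0$, $f(z)=2$ for every $z\in N(x)\setminus\{y\}$, $f(w)=1$ for matched $w\in N(y)\setminus\{x\}$, and $f(w)=0$ for unmatched such $w$, extended to all of $V$ by $\tilde f(v)=\min_{u}(f(u)+d(u,v))$.  A direct computation gives
\[\int f\,d\mu_x - \int f\,d\mu_y = \frac{2(d-1)+k}{2d},\]
which is $\geq 1$ precisely when $k\geq 2$; so under the hypothesis $N\geq 2$ we conclude $W_1(\mu_x,\mu_y)\geq 1$ and therefore $\kappa(x,y)\leq 0$.

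The main obstacle I anticipate is verifying that $f$ is 1-Lipschitz on $\{x,y\}\cup N(x)\cup N(y)$, since every other step is a bookkeeping exercise.  The tightest constraint is on pairs $(z,w)$ with $z$ matched and $w$ unmatched (or vice versa): one has $|f(z)-f(w)|=2$, which requires $d(z,w)\geq 2$.  This holds because $z\sim w$ would make $w$ the unique match of $z$, contradicting that $w$ is unmatched.  All remaining pairwise checks reduce to triangle-freeness together with the observation that two common neighbors of any vertex are at distance at most $2$ via that vertex.
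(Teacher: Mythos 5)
Your proposal is correct and follows essentially the same route as the paper: explicit transport plans giving $W_1\leq 1-\tfrac1d$ and $W_1\leq 1$ in parts (i) and (ii) (the paper's $w_i$ and $u$ are exactly your matched partners and unmatched $w_0$, with the same length-$3$ detour), and a $1$-Lipschitz dual witness yielding $W_1\geq 1+\tfrac{N-2}{2d}$ in part (iii). The only cosmetic differences are that you prove (i) by a direct coupling where the paper cites the hypercube computation, and your clean observation that the cross-adjacencies form a matching (so $N_x(y)=N_y(x)$) is left implicit in the paper.
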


\begin{proof}
(i) As in our previous result $G$ must be locally isomorphic to $\Omega_d$, it is well-known that $\kappa(x,y) = \tfrac{1}{d}$ in this case.\\

(ii) Let $y,z$ be an unlinked pair of neighbors of $x$, and let $v_1,\dots, v_{d-2}$ be the other neighbors of $x$.  For each $v_i$ there is a $w_i$ that links $v_i$ and $y$, let $u$ be the neighbor of $y$ that is not in $\{x,w_1,\dots, w_{d-2}\}$.  $d(u,z)\leq 3$ as $u\sim y\sim x\sim z$.  To bound $W_1(\mu_x,\mu_y)$ from above, consider the transfer of mass along paths $x\to y, v_i \to w_i$ and $z\to u$.  This gives a bound of $W_1(\mu_x,\mu_y) \leq 1$, and so $\kappa(x,y)\geq 0$.\\

(iii) Let $y$ be a neighbor of $x$ with $N_x(y)>1$.  We use a test-function $f$ to bound the dual formulation of $W_1$,\begin{align}\label{wdual} W_1(\mu,\nu) = \max_{f\in Lip(1)} \int f\ d\mu-\int f\ d\nu.\end{align}  For a test-function, set $f(x) = 0$, $f(y) = 1$, $f(v) = 0$ if $v$ is any other neighbor of $x$, $f(w) = 1$ if $w$ links $y$ to a neighbor of $x$, $f(u) = 2$ if $u$ is any other neighbor of $y$, and $f = 1$ on every other vertex, so that $f$ is $1$-lipschitz.

\begin{align*} W_1(\mu_x,\mu_y)\geq \int f\ d\mu_y -\int f\ d\mu_x = 1 + \frac{N-2}{2d}\geq 1,\end{align*} so $\kappa(x,y) \leq 0$.
\end{proof}

\begin{remark} $\kappa > 0$ is satisfied under hypothesis (ii) for $G = C_5$, or graphs with similar structure for which (in the language of the proof) $d(u,z) = 2$.  $\kappa = 0$ is satisfied under hypothesis (iii) e.g. for the dodecahedral graph, again because $d(u,z) = 2$.  \end{remark}

We combine the previous results to show a relationship between the curvatures.

\begin{corollary} Let $G$ be $d$-regular and have no subgraph isomorphic to either $K_3$ or $K_{2,3}$.  Let $x\in V$.\\
(i) If and only if there exists $\rho > 0$ so that $G$ satisfies $CD(\rho,\infty)$ at $x$, then $\kappa(x,y)>0$ for all $y\sim x$.\\
(ii) If $G$ satisfies $CD(0,\infty)$ at $x$, then $\kappa(x,y) \geq 0$ for all $y\sim x$.\\
(iii) If $G$ does not satisfy $CD(0,\infty)$ at $x$, then there is a vertex $y\sim x$ for which $\kappa(x,y)\leq 0$.  If there is a vertex $y\sim x$ for which $\kappa(x,y) < 0$, then $G$ does not satisfy $CD(0,\infty)$ at $x$. \end{corollary}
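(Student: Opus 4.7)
The plan is a direct case analysis on $N=\max_{y\sim x}N_x(y)$, leaning on Theorems 3.1 and 3.2, which together classify both curvatures according to whether $N=0$, $N=1$, or $N\geq 2$. Each clause of the corollary is obtained by matching the appropriate case of Theorem 3.1 with the corresponding case of Theorem 3.2.

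For part (i), if $CD(\rho,\infty)$ holds at $x$ for some $\rho>0$, then Theorem 3.1 rules out $N=1$ and $N\geq 2$, leaving $N=0$; Theorem 3.2(i) then delivers $\kappa(x,y)>0$ for every $y\sim x$. For part (ii), the hypothesis $CD(0,\infty)$ rules out $N\geq 2$ by Theorem 3.1(iii), so $N\leq 1$; Theorem 3.2(i)--(ii) then jointly give $\kappa(x,y)\geq 0$ for all $y\sim x$.

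Part (iii) requires both directions. For the forward direction, failure of $CD(0,\infty)$ forces $N\geq 2$ via the contrapositive of Theorem 3.1(i)--(ii); picking a witness $y_0\sim x$ with $N_x(y_0)\geq 2$, Theorem 3.2(iii), whose proof is carried out on precisely such a $y_0$, yields $\kappa(x,y_0)\leq 0$. For the converse, if $\kappa(x,y)<0$ for some $y$, then Theorem 3.2(i)--(ii) show that $N\leq 1$ would force $\kappa(x,y)\geq 0$; hence $N\geq 2$, and Theorem 3.1(iii) gives failure of $CD(0,\infty)$ at $x$.

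Nothing in this proof requires a new estimate; the work is entirely bookkeeping against the two trichotomies already established. The one subtlety worth flagging is in the forward half of part (iii), where I use that Theorem 3.2(iii) really produces $\kappa(x,y_0)\leq 0$ for the \emph{specific} neighbor $y_0$ attaining $N_x(y_0)\geq 2$, rather than for an arbitrary $y\sim x$; fortunately this matches the existential quantifier in the corollary's statement.
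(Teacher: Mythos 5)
Your case analysis on $N$ is exactly the argument the paper intends (the corollary is given with no proof beyond ``we combine the previous results''), and your derivations of part (ii), of both halves of part (iii), and of the forward implication in part (i) are correct --- including the observation that Theorem 3.2(iii) only certifies $\kappa(x,y_0)\le 0$ for a specific neighbor $y_0$ attaining $N_x(y_0)\ge 2$, which is all the existential claim in (iii) requires.

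The one genuine gap is in part (i), which is phrased as a biconditional (``if and only if \dots\ then''), whereas you prove only the direction from $CD(\rho,\infty)$ with $\rho>0$ to $\kappa(x,y)>0$ for all $y\sim x$. The converse cannot be extracted from the two trichotomies: when $N=1$, Theorem 3.2(ii) gives only $\kappa(x,y)\ge 0$, which does not rule out strict positivity, so ``$\kappa(x,y)>0$ for all $y\sim x$'' does not force $N=0$. In fact the paper's own remark after Theorem 3.2 notes that $C_5$ --- which has $N=1$ and is therefore $CD$-flat by Theorem 3.1(ii), failing $CD(\rho,\infty)$ for every $\rho>0$ --- satisfies $\kappa(x,y)>0$ for all adjacent pairs. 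So the reverse implication in (i) is false as literally stated. A complete treatment should either restrict (i) to the one-directional claim you prove, or explicitly flag that the ``only if'' half does not follow from Theorems 3.1 and 3.2 and is contradicted by $C_5$; silently dropping it leaves the stated biconditional unproved.
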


\subsection
{Examples}
We give examples of graphs with no subgraph isomorphic to $K_3$ or $K_{2,3}$, categorized by which subhypothesis of Theorems 3.1 and 3.2 they satisfy.  Most of these graphs are vertex-transitive, so the curvature is identical at every vertex.

As before, let $v$ be a neighbor of $x$ that is not linked to the largest number of other neighbors of $x$, and let $N$ be that number.
\\
Common graphs with $N=0$, satisfying hypothesis (i) include \begin{itemize}
\item The hypercube $\Omega_d$.
\end{itemize}

Common graphs with $N = 1$, satisfying hypothesis (ii) include

\begin{itemize}
\item The square lattice $\Z^n$ where $n\geq 1$.
\item The cyclic graph $C_k$ if $k\geq 5$.
\item Any product of the above graphs, or product of these graphs with a graph from the previous list.
\end{itemize}

Common graphs with $N > 1$, satisfying hypothesis (iii) include

\begin{itemize}
\item The infinite $d$-regular tree if $d\geq 3$.
\item Any $d$-regular graph with girth $\geq 5$ if $d\geq 3$.
\item The graph of triangulations of an $n$-gon ($n\geq 6$), with edges representing the action of flipping one interior arc of the polygon.
\item $S_n$ with edges corresponding to adjacent transpositions.
\end{itemize}

\subsubsection{Interchange Process}
Given an underlying graph $H = ([n],F)$, the interchange process labels the vertices of $H$ and at each step, we are allowed to exchange the labels of a pair of adjacent vertices.  Let $G$ be the graph of possible states with an edge between two states if we can move from one state to the other in a single step.  In other words, $G$ is the Cayley graph of the subgroup of $S_n$ with generating set $A = \{(i,j):\{i,j\}\in F\}$.

$G$ is always $K_3$-free, and will be $K_{2,3}$-free if and only if $H$ is triangle-free.  If $x\in V(G)$ and $a,b\in A$, $ax\approx bx$ iff $a$ and $b$ are not incident to the same vertex as edges of $H$.  Because of this, we can easily determine the value of $N$ for the interchange process in terms of $H$.

\begin{itemize}

\item If $H$ is a matching, $G$ satisfies hypothesis (i). 

\item If $H$ is the disjoint union of paths with length $\leq 2$ (at least one of which has length equal to $2$), $G$ satisfies hypothesis (ii).

\item If $H$ has any vertex of degree $\geq 3$ or any path of length $\geq 3$, then $G$ satisfies hypothesis (iii). 
\end{itemize}

\section{Triangle-free regular graphs.}

In this section we compare the curvature-dimension inequality to the Ollivier curvature for regular triangle-free graphs.  Unlike Section 3, we allow the graph to have $K_{2,3}$ as a subgraph.  In this more general case our results are less complete.  In both notions of discrete curvature we give characterizations of classes of graphs with positive curvature, but it remains open to give a full characterization of graphs with positive curvature.

Let $G = (V,E)$ be a triangle-free $d$-regular graph.

\begin{definition}Let $u,w$ be two neighbors of $x$.  The \textit{linkage} of $u$ and $w$ is calculated by summing over all vertices $z\neq x$ for which $u\sim z\sim w$: \begin{align*}
l(u,w) = \sum_z \frac{1}{|\{y: x\sim y\sim z\}|} 
\end{align*}
\end{definition}

\begin{remark} As discussed in Section 2, if $G$ contains no subgraph isomorphic to $K_{2,3}$, then there can be at most one vertex $z$ with $z\neq x, u\sim z\sim w$, which will have $\{y:x\sim y\sum z\} = \{u,w\}$.  In this case $l(u,w) = 1/2$ and (as before) we say that $u$ and $w$ are linked.   
\end{remark}

\begin{theorem}
Let $G$ be a triangle-free graph, and $x\in V(G)$.  If for every pair $u,w$ of neighbors of $x$, $l(u,w)\geq \frac{1}{2}$, then $G$ satisfies $CD(\rho,\infty)$ at $x$ iff $\rho \leq 2$.\\
\end{theorem}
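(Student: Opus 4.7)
The proof has two directions: show that the hypothesis forces $CD(2,\infty)$, and exhibit a test function achieving the ratio $2$.

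For the forward direction, my plan is to specialize the given formula for $2\Gamma_2 f(x)$ to the triangle-free regular setting. Since $G$ is triangle-free, the sum indexed by triangles vanishes, and since $G$ is $d$-regular, the coefficient $\frac{4-d(x)-d(v)}{2}$ collapses to $2-d$. This leaves
\[
2\Gamma_2 f(x) = \tfrac{1}{2}\!\!\sum_{\substack{u\sim v\sim x\\ d(x,u)=2}}\!(f(u)-2f(v))^2 + \Bigl(\sum_{v\sim x}f(v)\Bigr)^2 + (2-d)\sum_{v\sim x}f^2(v),
\]
while $2\Gamma f(x) = \sum_{v\sim x}f^2(v)$. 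I then use the averaging rule noted after the definition: for each fixed $u$ at distance $2$, minimize over $f(u)$ by setting it to $2\bar f_u$, where $\bar f_u$ is the average of $f$ on $S_u := \{v : x\sim v\sim u\}$. A direct calculation gives $\tfrac{1}{2}\sum_{v\in S_u}(2\bar f_u-2f(v))^2 = \tfrac{2}{|S_u|}\sum_{\{v,w\}\subset S_u}(f(v)-f(w))^2$.

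Next I swap the order of summation over $u$ (at distance $2$) and over pairs $\{v,w\}$ of neighbors of $x$. By the definition of linkage, $\sum_{u \in L(v,w)} 1/|S_u| = l(v,w)$, where $L(v,w)$ is the set of common neighbors of $v,w$ other than $x$. Thus
\[
\tfrac{1}{2}\!\!\sum_{\substack{u\sim v\sim x\\ d(x,u)=2}}\!(f(u)-2f(v))^2 \;\geq\; 2\!\sum_{\{v,w\}\subset N(x)}\! l(v,w)(f(v)-f(w))^2.
\]
Applying $l(v,w)\ge \tfrac12$ and using the identity $\sum_{\{v,w\}}(f(v)-f(w))^2 = d\sum_v f(v)^2 - (\sum_v f(v))^2$ (summing over the $d$ neighbors), the $(\sum f(v))^2$ pieces cancel and I am left with $2\Gamma_2 f(x) \ge 2\sum_{v\sim x} f(v)^2 = 4\Gamma f(x)$, which is $CD(2,\infty)$.

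For the converse, I set $f(x)=0$, $f(v)=1$ for all $v\sim x$, and (as dictated by the optimization rule) $f(u)=2$ on second neighbors. Then every $(f(u)-2f(v))^2$ vanishes and every $(f(v)-f(w))^2$ vanishes, so the inequalities above are equalities and a direct evaluation gives $2\Gamma_2 f(x) = d^2 + (2-d)d = 2d$ and $2\Gamma f(x) = d$, so $\Gamma_2 f/\Gamma f = 2$, ruling out any $\rho>2$.

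I expect the bookkeeping in the swap-of-summation step to be the main thing to get right, since it is where the linkage hypothesis enters and where the averaging bound is used; the rest is essentially algebraic rearrangement. No additional result beyond the $\Gamma_2$ formula and the optimal second-neighbor value, both stated in the preliminaries, is needed.
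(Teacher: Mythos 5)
Your proposal is correct and follows essentially the same route as the paper: your swap-of-summation computation is exactly the derivation of the paper's identity $2\Gamma_2 f(x) = \sum_{y\sim x}(3-d)f^2(y) + \sum_{w,y\sim x}2f(y)f(w) + \sum_{w,y\sim x}2l(w,y)(f(w)-f(y))^2$ (which the paper leaves as ``straightforward algebraic manipulation''), and the bound $l\geq\tfrac12$ then yields $CD(2,\infty)$ just as in the paper. The only divergence is that for sharpness the paper cites the known fact that triangle-free graphs fail $CD(\rho,\infty)$ for $\rho>2$, whereas you verify it directly with the test function that is $1$ on neighbors and $2$ on second neighbors; your calculation there is correct and makes the argument self-contained.
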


\begin{proof}
It is known that any triangle-free graph $G$ fails $CD(\rho,\infty)$ if $\rho > 2$ \cite{KKRT15}.

Recall that if $f$ is the minimizer of $\Gamma_2f(x)/\Gamma f(x)$ and $d(x,z) = 2$, \begin{align*}f(z) = \frac{2}{\# y: x\sim y\sim z}\sum_{y:x\sim y\sim z} f(y).\end{align*}

With this minimization (and the assumption of a triangle-free graph), straightforward algebraic manipulation reveals a form for $\Gamma_2 f$:
\begin{align}\label{gamma_linkage}2\Gamma_2 f(x) = \sum_{y\sim x} (3-d)f^2(y) + \sum_{w,y\sim x}2f(y) f(w) + \sum_{w,y\sim x} 2l(w,y) \parens{f(w)-f(y)}^2
\end{align}

Because $l\geq 1/2$ we can bound this equation:

\begin{align*}2\Gamma_2 f(x) &\geq \sum_{y\sim x} (3-d)f^2(y) + \sum_{w,y\sim x}2f(y) f(w) + \sum_{w,y\sim x} \parens{f(w)-f(y)}^2\\ &= \sum_{y\sim x} (3-d)f^2(y) + \sum_{w,y\sim x}2f(y) f(w) +\sum_{y\sim x} (d-1)f^2(y)- \sum_{w,y\sim x}2f(y)f(w)\\ &= 2\sum_{y\sim x}f^2(x) = 4\Gamma f(x),
\end{align*}
and therefore $CD(2,\infty)$ is satisfied.
\end{proof}

We prove a result regarding the Ollivier curvature of a related class of graphs:

\begin{theorem}
Let $G$ be a triangle-free graph. If every pair of adjacent edges of $G$ are contained within exactly one maximal complete bipartite induced subgraph whose parts have equal size, then $\kappa(x,y) = \frac{1}{d}$ for all pairs of neighbors $x,y$. 
\end{theorem}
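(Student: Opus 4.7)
The plan is to sandwich $W_1(\mu_x,\mu_y)$ between $(d-1)/d$ from both sides, which yields $\kappa(x,y)=1-W_1(\mu_x,\mu_y)=1/d$.

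For the lower bound I would use the dual formulation \eqref{wdual} with the $1$-Lipschitz test function $f(v)=d(x,v)$. Triangle-freeness forces $d(x,v)=2$ for every $v\sim y$ with $v\neq x$ (otherwise $x,y,v$ would form a triangle), while every $v\sim x$ satisfies $d(x,v)=1$, so a short computation gives $\int f\,d\mu_y-\int f\,d\mu_x=(d-1)/d$ and hence $W_1(\mu_x,\mu_y)\geq(d-1)/d$. This half requires only triangle-freeness, not the bipartite hypothesis.

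For the upper bound I would build a transport plan of cost exactly $(d-1)/d$ using the bipartite structure. For each pair of neighbors $y,z$ of $x$, let $B(y,z)$ be the unique maximal equal-part complete bipartite subgraph containing the edges $xy,xz$, and write $S(y,z)$ for its side containing $y,z$ and $T(y,z)$ for its side containing $x$. Uniqueness forces $B(z_1,z_2)=B(y,z)$ whenever $z_1,z_2\in S(y,z)$, so the distinct sets $S^{(1)},\ldots,S^{(r)}$ arising from the blocks through the edge $xy$ pairwise intersect only in $\{y\}$; since every other neighbor of $x$ pairs with $y$ in some block, the $S^{(i)}\setminus\{y\}$ partition the other neighbors of $x$. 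The crux is to show the parallel partition on the opposite side: that the $T^{(i)}\setminus\{x\}$ partition the other neighbors of $y$. This follows by applying the hypothesis at $y$: for any $t\in T^{(i)}\setminus\{x\}$, the bipartite $B^{(i)}$ is itself a maximal equal-part complete bipartite containing the edges $yx,yt$, so by uniqueness it is the block at $y$ through the pair $(x,t)$, and running the same partition argument at $y$ identifies the $T^{(i)}\setminus\{x\}$ as the partition of the remaining neighbors of $y$.

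Given the partition, for each $i$ I would pick an arbitrary bijection $\phi_i\colon S^{(i)}\setminus\{y\}\to T^{(i)}\setminus\{x\}$; these assemble into a global bijection $\phi$ from the other neighbors of $x$ to the other neighbors of $y$, with $z\sim\phi(z)$ automatic by complete-bipartiteness. The transport plan then sends mass $(d-1)/(2d)$ from $x$ to $y$, keeps mass $1/(2d)$ in place at each of $x$ and $y$, and sends mass $1/(2d)$ from each other neighbor $z$ of $x$ to $\phi(z)$; each nonzero step moves mass across a single edge, so the total cost is $(d-1)/(2d)+(d-1)\cdot 1/(2d)=(d-1)/d$. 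The main delicacy is the symmetric identification of blocks at $x$ through $y$ with blocks at $y$ through $x$, which is what licenses the construction of $\phi$; once the two-sided partition is in hand the rest is routine mass accounting.
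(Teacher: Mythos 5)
Your proof is correct and takes essentially the same route as the paper: the paper likewise partitions the other neighbors of $x$ into blocks $S_i$ and the other neighbors of $y$ into matching blocks $T_i$ according to the unique maximal equal-part complete bipartite subgraph through the edge $xy$, then transports mass $|S_i|/(2d)$ from $S_i$ to $T_i$ and mass $\tfrac{1}{2}-\tfrac{1}{2d}$ from $x$ to $y$ for a total cost of $1-\tfrac{1}{d}$. You supply two details the paper leaves implicit --- the matching lower bound via the $1$-Lipschitz function $d(x,\cdot)$ in the dual formulation, and the verification that the $T_i\setminus\{x\}$ genuinely partition the remaining neighbors of $y$ so that the required bijections exist --- but the underlying argument is the same.
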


\begin{proof}
Let $x,y$ be a pair of neighboring vertices in $G$.  The neighbors $w$ of $x$ with $w\neq y$ are partitioned into $S_1,...,S_k$ according to which maximal complete bipartite subgraph contains $\{xy\}$ and $\{xw\}$.  Similarly neighbors of $y$ (other than $x$) are partitioned into $T_1,...,T_k$, so that $\{x\}\cup T_i,\{y\}\cup S_i$ are the equally sized parts of a maximal complete bipartite graph.

To minimize $W_1(\mu(x),\mu(y))$ requires shifting a mass of size $\frac{|S_i|}{2d}$ from vertices of $S_i$ to those in $T_i$ and a mass of size $\frac{1}{2}-\frac{1}{2d}$ from $x$ to $y$ at a total cost of $\frac{1}{2}-\frac{1}{2d}+\frac{1}{2d}\sum_{i}|S_i| = 1 - \frac{2}{2d}$, this proves the theorem.
\end{proof}

Observe that under the hypothesis of Theorem 4.2, a pair of vertices $y,w$ that are neighbors of $x$ will be contained within a copy of $K_{m,m}$ for some value $m\geq 2$.  In this case, $l(w,y)\geq (m-1)\frac{1}{m}\geq 1/2$, so the hypothesis of Theorem 4.1 is also satisfied, and such a graph will have positive curvature both in terms of the $CD$ inequality and Ollivier curvature.

Examples of common graphs:\begin{itemize}

\item The hypercube $\Omega_n$, where neighbors $w$ and $y$ of $x$ are linked by a unique vertex $z$, so that $x,z,w,y$ are the vertices of a copy of $K_{2,2}$, and $l(x,y) = 1/2$.  $\Omega_n$ satisfies $CD(\rho,\infty)$ iff $\rho \leq 2$ and has $\kappa = 1/n$.

\item $G = K_{n,n}$ for $n\geq 2$, which has $l(y,w) = \frac{n-1}{n}$.  $K_{n,n}$ satisfies $CD(\rho,\infty)$ iff $\rho \leq 2$ and has $\kappa = 1/n$.

\item The Cayley graph of $S_n$ generated by all interchanges $(ij):i,j\in [n]$:

If $y = (ij)x$ and $w=(ik)x$, then $x,w,y$ are contained within the copy of $K_{3,3}$ that also includes $(jk)x,(ijk)x,(jik)x$.  In this case $l(w,y) = 2/3$.

On the other hand if $y = (ij)x$ and $w = (kl)x$, those vertices are contained within a square that includes $x$ and $(ij)(kl)x$, and $l(w,y) = 1/2$.

$G$ satisfies $K_{n,n}$ satisfies $CD(\rho,\infty)$ iff $\rho \leq 2$ and has $\kappa = 1/\binom{n}{2}$

\item If $X = ([n],E)$ is a graph, the interchange process on $X$ will always give an $|E|$-regular triangle-free graph.  If $X$ is a union of disjoint cliques, it is simple to see that $G$ satisfies the hypotheses of Theorems 4.1 and 4.2.  Indeed many of the previous examples are of this type: $\Omega_d$ corresponds to $X$ being a perfect matching on $2d$ vertices, and $S_n$ corresponds to $X = K_n$.

\item An $(n,d,k)$-incidence graph is a $d$-regular bipartite graph with partite sets $A_1,A_2$ of $n$ elements each so that each pair of elements in $A_i$ share $k$ common neighbors in $A_{1-i}$.  In order that such a graph exists, $(n-1)k = d(d-1)$ must count the number of $2$-paths starting (and not ending) at $x$.

Without loss of generality $x\in A_1$ and $x$ has neighbors $y,w\in A_2$, then $y$ and $w$ share $k-1$ other neighbors, each of which is adjacent to $k$ neighbors of $x$.  $l(w,y) = \frac{k-1}{k}$.  If $k\geq 2$, an $(n,d,k)$ incidence graph satisfies $CD(\rho,\infty)$ iff $\rho \leq 2$.

\end{itemize}

\section{Other Results}

\subsection{zig-zag product}
The zig-zag product is defined \cite{RVW} for graphs $G_1,G_2$, where 
\begin{itemize}
\item $G_1$ is regular with degree $d = |V_2|$, and for each $a\in V(G_1)$ the incident edges are indexed by $V_2$ - so that we can write $a[x]$ for the unique neighbor of $a$ that shares an edge labelled $x$ with $a$.
\item $G_2$ is $D$-regular.
\item The vertex set of the zig-zag product is $G_1\times G_2$.
\item If $x\sim y\sim z$ is a $2$-walk in $G_2$, then $(a,x)\sim (a[y],z)$.
\end{itemize}

The zig-zag product is a $D^2$-regular graph that inherits its expansion properties from $G_1$, which may have much larger degree.  For this reason the zig-zag product is useful in generating expander graphs of bounded degree.  The question arises whether the zig-zag product inherits the curvature properties from $G_1$ or $G_2$.  In general this is not the case, we give an example of such a graph:

If $G_1$ and $G_2$ are both abelian Cayley graphs, it is not in general true that the zig-zag product will have non-negative curavture: as a simple example if $G_1 = \Omega_d$ and $G_2 = Z_d$ with vertices $1\dots, d$ labelled in order around the cycle, and $x\in V_1$ with $y = x[2]$, $z = x[d]$, then $(x,1)$ has four neighbors - $(y,1), (y,3), (z,1), (z,d-1)$.\\$(y,1)$ has neighbors $(x,1),(x,3),(y[d],1),(y[d],d-1)$.\\
$(y,3)$ has neighbors $(x,1),(x,3),(y[4],3),(y[4],5)$.\\
$(z,1)$ has neighbors $(x,1),(x,d-1),(z[2],1),(z[2],3)$.\\
$(z,d-1)$ has neighbors $(x,1), (x,d-1),(z[d-2],d-1),(z[d-2],d-3)$.

$G$ is $K_3$ and $K_{2,3}$-free, we examine which pairs of neighbors of $(x,1)$ are not linked.  We see that $(y,3)\not\approx (z,1), (y,3)\not\approx (z,d-1), (y,1)\not\approx (z,d-1)$.  As such, $N=2$, and $G$ satisfies hypothesis (iii) of Theorems 3.1-2.

As such, $G$ fails $CD(0,\infty)$, and simple calculation gives $\kappa((x,1),(y,3)) = -\tfrac{1}{4}$.

\subsection{Diameter bounds}
It is well-known that a positive lower bound on Ollivier curvature gives an upper bound on the diameter of the graph, according to the following argument developed from \cite{Oll07}.

Assume for every pair of neighboring vertices $x,y$, $\kappa(x,y)\geq \kappa*>0$.  Let $x_0,\dots,x_l$ be a geodesic path, then \begin{align*}W_1(\mu_{x_0},\mu_{x_l})\leq \sum_{i=1}^l W_1(\mu_{x_{i-1}},\mu_{x_i}) \leq l(1-\kappa*),\end{align*} but, considering the $1$-Lipschitz function $f(y) = d(x_0,y)$, \begin{align*}W_1(\mu_{x_0},\mu_{x_l})\geq \int f d\mu_{x_l}-\int fd\mu_{x_0}\geq (l-\tfrac{1}{2})-(\tfrac{1}{2}) = l-1.\end{align*}

Thus $l(1-\kappa*)\geq l-1$, and $1\geq l\kappa*$; as the diameter is achieved on a geodesic path, its length $D$ is bounded above by $D\leq \frac{1}{\kappa*}$.

\begin{corollary}If $G$ is a $d$-regular graph with positive curvature, then the diameter is bounded by $D\leq 2d$.\end{corollary}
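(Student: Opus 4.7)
The plan is to combine the Ollivier-curvature diameter bound derived just above with a simple integrality property of $W_1(\mu_x,\mu_y)$. Setting $\kappa^* = \min_{x\sim y}\kappa(x,y)$, which is positive by hypothesis, the inequality $D \leq 1/\kappa^*$ reduces the corollary to showing that whenever $\kappa(x,y)>0$ on an edge $xy$, one already has $\kappa(x,y)\geq 1/(2d)$. Equivalently, the task is to rule out curvature values strictly between $0$ and $1/(2d)$.

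The key step is the claim that $2d\cdot W_1(\mu_x,\mu_y)\in\Z_{\geq 0}$ for every pair of adjacent vertices. Since each of $\mu_x$ and $\mu_y$ assigns to every vertex a mass in $\frac{1}{2d}\Z_{\geq 0}$ and has finite support, rescaling by $2d$ turns the computation of $W_1(\mu_x,\mu_y)$ into a min-cost transportation problem with integer supplies, integer demands, and integer costs $d(u,v)$. The transportation polytope associated to such a problem has a totally unimodular constraint matrix, so all of its vertices are integer; the LP optimum is therefore attained at an integer transport plan and yields an integer optimal value. Dividing by $2d$ gives the claim.

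Combining the two observations, $\kappa(x,y)=1-W_1(\mu_x,\mu_y)\in\frac{1}{2d}\Z$, and positivity forces $\kappa(x,y)\geq 1/(2d)$. Hence $\kappa^*\geq 1/(2d)$, and the preceding paragraph immediately yields $D\leq 1/\kappa^*\leq 2d$.

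The only substantive ingredient beyond what is written in the surrounding text is the integrality of the transportation polytope. If one wished to avoid citing total unimodularity directly, the same conclusion can be reached by observing that any optimal plan can be decomposed into finitely many ``atoms'' of mass $\frac{1}{2d}$ travelling along geodesics of integer length, so that the total cost is manifestly a non-negative integer multiple of $\frac{1}{2d}$; this is the main (minor) obstacle in the argument, and it is a standard fact rather than a new computation.
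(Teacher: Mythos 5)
Your proposal is correct and follows essentially the same route as the paper: both reduce the corollary to showing $\kappa(x,y)\geq\frac{1}{2d}$ on every edge via the fact that $W_1(\mu_x,\mu_y)$ is an integer multiple of $\frac{1}{2d}$, then invoke the diameter bound $D\leq 1/\kappa^*$. The only difference is cosmetic: you justify the integrality of the optimal transport cost by total unimodularity of the transportation polytope, whereas the paper simply asserts the existence of an optimal plan moving mass in atoms of $\frac{1}{2d}$ along integer-length paths.
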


For any $x,z\in V$, $\mu_x(z)$ is an integer multiple of $\tfrac{1}{2d}$.  There is an optimal solution to the minimum-transport problem between $\mu_x$ and $\mu_y$ with the property that a multiple of $\tfrac{1}{2d}$ is transported along each path used - as each path has integer length, $W_1(\mu_x,\mu_y)$ is a multiple of $\tfrac{1}{2d}$.  If $x,y$ are neighbors and $\kappa(x,y) > 0$, then it must be $\kappa(x,y) \geq \tfrac{1}{2d}$.  If $x,y$ are not neighbors, then for any $z\sim x$ along the shortest $x-y$ path, $\kappa(x,y)\geq \kappa(x,z)\geq \tfrac{1}{2d}$.  Thus $\kappa* \geq \tfrac{1}{2d}$, and $D\leq 1/\kappa* \leq 2d$.

\begin{remark}
If $d$ is instead an upper bound on the degree of the vertex, then for any $x,y,z,w$, $\mu_x(z)$ and $\mu_y(w)$ are both multiples of $\frac{1}{2d_xd_y}$, and the same argument follows with $\kappa(x,y)\geq \frac{1}{2d_xd_y} \geq \frac{1}{2d^2-2d}$ and thus $D\leq 2d^2-2d$.
\end{remark}

\begin{remark}
This argument depends in the specifics on the definition of $\mu_x$ which is not universal, but for any of the other common definitions I am aware of a similar argument will follow.
\end{remark}

\begin{corollary} There is no infinite family of bounded-degree graphs with a lower bound on curvature of $\kappa > 0$.
\end{corollary}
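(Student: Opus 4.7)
The plan is to combine Corollary 5.1 (and in particular the refinement in the preceding Remark for bounded-degree graphs) with the elementary fact that a graph with bounded degree and bounded diameter has boundedly many vertices. Let $d$ be the uniform degree bound and $\kappa^* > 0$ the uniform lower curvature bound for the hypothesized infinite family.

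First, I would invoke the Remark following Corollary 5.1, which establishes that for any graph $G$ in the family, $\kappa(x,y) \geq \frac{1}{2d_x d_y} \geq \frac{1}{2d^2 - 2d}$ for adjacent $x,y$, and hence the diameter of $G$ satisfies $D(G) \leq 2d^2 - 2d$. This is the crucial step: the curvature argument already rules out both infinite single graphs and unbounded-diameter finite graphs in the family, once the degree is bounded.

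Next, I would apply the standard ball-growth bound: in any graph of maximum degree $d$ and diameter $D$, fixing any vertex $x_0$, the number of vertices is at most $1 + d \sum_{i=0}^{D-1} (d-1)^i$, which is a fixed finite number depending only on $d$. Substituting $D \leq 2d^2 - 2d$ yields an absolute upper bound on $|V(G)|$ in terms of $d$ alone. Therefore the family contains only finitely many graphs, contradicting the assumption that it is infinite.

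There is essentially no obstacle here since all the real work is packaged into Corollary 5.1 and its Remark; the only subtlety is to make sure we use the bounded-degree version of the diameter bound (giving $D \leq 2d^2 - 2d$) rather than the $d$-regular version (giving $D \leq 2d$), since the hypothesis only specifies that degrees are bounded, not that every graph is $d$-regular for a common $d$. One could alternatively phrase the argument contrapositively: if a sequence $\{G_n\}$ of graphs of degree at most $d$ has $|V(G_n)| \to \infty$, then their diameters must tend to infinity, and hence by the Remark their curvature lower bounds $\kappa^*(G_n)$ must satisfy $\kappa^*(G_n) \to 0$, so no uniform positive lower bound is possible.
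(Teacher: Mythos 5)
Your proposal is correct and matches the paper's (implicit) intended argument: the paper states this corollary as an immediate consequence of the diameter bound $D\leq 1/\kappa^*$ together with the Remark extending it to bounded degree, since bounded degree plus bounded diameter forces a bounded number of vertices and hence only finitely many graphs. Your care in using the bounded-degree refinement $D\leq 2d^2-2d$ rather than the $d$-regular bound is appropriate and consistent with the paper.
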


It is known that a planar graph with bounded degree on $n$ vertices has spectral gap $\lambda = O(1/n)$\cite{ST07}.  The previous theorem tells us that among planar graphs with bounded degree, there is no infinite family that makes the bound $\kappa = O(1/n)$ tight, indeed, $\kappa \leq 0$ for all but finitely many bounded degree graphs.

In the case of graphs without bounded degree, it is true that $\kappa = 1/n$ for the star graph with $n$ leaves, which is planar.

\section*{Acknowledgements} The author thanks Max Fathi and Prasad Tetali for valuable discussions and examples of graphs relevant to this topic.

\bibliographystyle{plainnat}
\bibliography{ricci.bib}

\end{document}